\newcommand{\zerodisplayskips}{%
  \setlength{\abovedisplayskip}{0.05in}%
  \setlength{\belowdisplayskip}{0.05in}%
  \setlength{\abovedisplayshortskip}{0in}%
  \setlength{\belowdisplayshortskip}{0.05in}}
\appto{\small}{\zerodisplayskips}
\appto{\footnotesize}{\zerodisplayskips}
\titlespacing{\section}{0pt}{0.15in}{0.1in}
\titlespacing{\subsection}{0pt}{0.05in}{0.05in}
\newtheorem{theorem}{Theorem}
\newtheorem{proposition}[theorem]{Proposition}
\newtheorem{corollary}[theorem]{Corollary}
\newtheorem{lemma}[theorem]{Lemma}
\theoremstyle{remark}
\newtheorem{remark}[theorem]{Remark}
\theoremstyle{definition}
\newtheorem{definition}[theorem]{Definition}
\newtheorem{example}[theorem]{Example}
\begin{document}
\definecolor{qqffqq}{rgb}{0,1,0}
\definecolor{qqqqff}{rgb}{0,0,1}
\definecolor{ffqqqq}{rgb}{1,0,0}
\definecolor{zzttqq}{rgb}{0.6,0.2,0}
\definecolor{zzttqq}{rgb}{0.6,0.2,0}
\definecolor{uuuuuu}{rgb}{0.26666666666666666,0.26666666666666666,0.26666666666666666}
\definecolor{ffqqqq}{rgb}{1,0,0}
\definecolor{ttffqq}{rgb}{0.2,1,0}
\definecolor{xdxdff}{rgb}{0.49019607843137253,0.49019607843137253,1}
\definecolor{qqqqff}{rgb}{0,0,1}

\title{\textbf{\large{Segre's theorem on ovals in Desarguesian projective planes}}}

\author{Patrick J. Browne\thanks{Technological University of the Shannon: Midlands Midwest, Ireland} \quad Steven T. Dougherty\thanks{University of Scranton} \quad Padraig \'O Cath\'ain\thanks{Dublin City University}}
\maketitle

\begin{abstract} 
Segre's theorem on ovals in projective spaces is an ingenious result from the mid-twentieth century which requires 
surprisingly little background to prove. This note, suitable for undergraduates with experience of linear and abstract algebra, provides a complete and self-contained proof. All necessary pre-requisites, principally evaluation of homogeneous polynomials 
at projective points and Desargues' theorem are presented in full. While following the broad outline of Segre's proof, careful parameterisation of certain tangent lines results in shorter and simpler computations than the original.
\end{abstract} 
\bigskip

One of the most significant advances in the history of mathematics was the discovery in the 17th century, principally by Descartes, that geometry could be understood in algebraic terms. For example, a circle is defined geometrically as the set of points equidistant from a given point. Algebraically, this could be understood as the set of $x$ and $y$ satisfying $(x-a)^2 + (y-b)^2 =r^2.$ Within this framework lines and conic sections could be described in an algebraic manner. The power of this connection was that one could maintain geometric intuition but have the power of algebra to construct rigorous proofs. It is no exaggeration to say that this discovery led to the development of calculus, differential equations, linear algebra and most of modern mathematics. In this paper, we shall investigate a fascinating connection between a geometric object and an algebraic description, this time in a finite projective space.

The study of projective geometry has its roots in the attempts of renaissance artists to accurately depict 
three dimensional scenes on a two-dimensional canvas. In the eighteenth century, it was realised that 
the mathematical study of geometry is rather easier in projective spaces than in Euclidean spaces, and 
projective geometries remain central objects in modern mathematics. In this note we consider only 
the lowest dimensional projective spaces, which are projective planes. Our purpose is to prove a theorem 
of Segre identifying certain combinatorial configurations with the set of points at which a suitable polynomial 
takes the value $0$.

Projective planes may be defined axiomatically as follows. 

\begin{definition} 
A projective plane consists of \textit{points}, \textit{lines} and an \textit{incidence relation} relating points and lines which 
obey the following axioms: 
\begin{enumerate} 
\item There is a unique line incident with any two distinct points. 
\item Any two distinct lines are incident with a unique point.
\item There exist four points, no three incident with a line. 
\end{enumerate} 
\end{definition} 

The third axiom is necessary so that certain degenerate geometric objects are not planes (e.g. where all points lie on a single line). A projective plane may be constructed from a two-dimensional vector space by `completing' the space with 
a number of \textit{points at infinity}\footnote{There is nothing infinite about these points in a finite plane. In the infinite projective plane formed from a Euclidean plane, such points would seem to be located at an infinite distance from every other point.}. It is more convenient mathematically to begin with a three dimensional vector space.
Define \textit{projective points} to be one dimensional subspaces and \textit{projective lines} to be two dimensional 
subspaces, with incidence given by containment\footnote{The reader should be aware that the projective dimension is typically one less than the standard vector-space dimension. In the remainder of this note, dimensions are projective.}. Verifying the axioms for a projective plane requires only 
elementary linear algebra: 
\begin{enumerate} 
\item Two distinct one-dimensional subspaces span a unique two-dimensional space. 
\item Two distinct two-dimensional subspaces must intersect in a one-dimensional space (because both spaces live in a three dimensional space - this claim would not hold if we began with a vector space of dimension $\geq 4$). 
\item There exist four lines, any three of which span the space, consider for example the lines spanned by 
\[ (1,0,0),  \;\;\; (0,1,0), \;\;\; (0,0,1), \;\;\; (1,1,1) \,,\] 
with respect to an arbitrary basis. 
\end{enumerate} 

While a projective plane is constructed from a three dimensional vector space, it is really a two-dimensional object: the points of the form $[1:y:z]$ clearly form a two dimensional plane, while the remaining points $[0:1:z]$ are often called \textit{points at infinity}. Each parallel class of lines meets at a point at infinity, and all points at infinity are collinear. By having these two distinct descriptions of the same space, we are able to use whichever one is easier to construct a proof of a given result. Vanishing points in perspective drawing may be considered points at infinity, and (at least with one eye closed) we perceive the real projective plane visually, as opposed to three dimensional Euclidean space.

While a projective plane may be constructed from a three dimensional vector space over any field, particular 
interest pertains to the case of finite fields, in which case finite projective planes are obtained. 
There exists a finite field, unique up to isomorphism, of any 
prime power order $q$, which we denote $\mathbb{F}_{q}$, \cite{Isaacs}. (For the less experienced reader, the field of prime order $p$ is precisely the integers modulo $p$. Nothing is lost by considering this case throughout the paper.) 
It is an easy exercise to see that the number of one- and two-dimensional subspaces of a three-dimensional vector space over $\mathbb{F}_{q}$ is $q^{2} + q + 1$, while the number of one-dimensional subspaces in a two-dimensional space is $q+1$. We conclude that a finite projective plane constructed from a vector space necessarily has $q^{2} + q+1$ points and an equal number of lines. Additionally, there are $q+1$ points incident with any line, and $q+1$ lines incident with any point. Projective planes have been considered by mathematicians of the highest calibre: Hilbert and Artin both wrote undergraduate-accessible accounts of the foundations of geometry with a particular emphasis on projective planes, \cite{HilbertFoundations, ArtinGeometric}. Finite projective planes are a more specialised topic, to which monographs have nevertheless been devoted. 
Hughes and Piper, and one of the authors have written at advanced undergraduate level, while Dembowski's work is more demanding, \cite{HughesPiperPP, DoughertyFG, Dembowski}.

We typically denote the line $\{ (at, bt, ct) : t \in \mathbb{F}_{q}\}$ by the \textit{projective (or homogeneous)
coordinates} $[a : b: c]$. Sometimes it is convenient to normalise projective coordinates so that the 
first non-zero entry is $1$: provided $a$ is non-zero the projective points $[a : b: c]$ and $[1 : a^{-1}c : a^{-1}b]$ are equal. 

\begin{definition} 
A projective plane is \textit{Desarguesian} if it is constructed from a three dimensional vector space over a field
(or more generally, a division algebra). Equivalently, if it admits projective co-ordinates in a field (or division ring). 
The projective plane over the field $k$ is denoted $\textrm{PG}_{2}(k)$. Otherwise, it is \textit{non-Desarguesian}. 
\end{definition} 

In this note we consider only Desarguesian projective planes over finite fields. There do exist finite projective planes that are non-Desarguesian. The smallest of these has $91 = 9^{2} + 9 + 1$ points. Some planes of this type may be constructed from so-called \textit{quasi-fields}, but others seem to have no discernible algebraic structure. It is one of the foundational questions of finite projective planes to determine for which orders non-Desarguesian planes exist. It is known that there is no such plane containing $43 = 6^{2} +6 +1$ points or $111 = 10^{2} + 10 + 1$ points, but existence is open for a plane on $157 = 12^{2} +  12 + 1$ points, and for infinitely many larger values.

\section{Conics and ovals in a projective plane}

Because a projective point corresponds to many distinct points in the underlying vector space, it does not make sense to evaluate a polynomial at a projective point, but it \textbf{does} make sense to ask whether a homogeneous 
polynomial is zero or non-zero at a projective point, as 
\[ F(\lambda x,\lambda y,\lambda z) = \lambda^{k}F(x,y,z) \] 
for a homogeneous polynomial of degree $k$. The locus of points at which a homogeneous polynomial in three variables evaluates to zero on a Desarguesian projective plane is called the \textit{variety} of the polynomial. We make no attempt to develop the theory of algebraic varieties, the interested reader is referred to Shafarevich, Chapter 1 \cite{Shafarevich}.

\begin{enumerate} 
\item A homogeneous polynomial of degree $1$ describes a line in a projective plane. For example, the equation $y = 0$ 
describes the projective points $[1:0:z]$ with $z \in \mathbb{F}_{q}$ together with the point $[0:0:1]$. 
Setting $x + 2y - z = 0$ gives the points $[x : y : x+2y]$ where $x,y \in \mathbb{F}_{q}$. While it may not appear that 
this set is one dimensional, working projectively and setting $z = \frac{y}{x}$ gives the set $[1:z:1+2z]$ where $z \in \mathbb{F}_{q}$ 
together with the point $[0 : 1 : 2]$. 
\item Over the real field homogeneous polynomials of degree $2$ describe circles, 
ellipses and hyperbolae. Arguably the greatest achievement of ancient Greek geometry was the unified treatment of these different varieties by considering the intersection of a plane and a cone in three dimensional space, leading to the name \textit{conic sections} for homogeneous polynomials of degree $2$. 

In analogy with the real case, we call a homogeneous polynomial of degree two over \textbf{any} field a \textit{conic}. We shall see shortly that the theory of conics over a finite field is quite different from the case of the real field. The points of the conic $x^{2} + y^{2} + z^{2}$ correspond to projective points $[ x: y: \sqrt{-x^{2} - y^{2}}]$ which is an empty set over the real field. On the other hand, it can be verified that $4^{2} = -(1^{2} + 2^{2}) \mod 7$ so this variety is non-empty over $\mathbb{F}_{7}$. We shall see shortly via purely geometric arguments that a non-degenerate conic section over a finite field of odd order will always have $q+1$ points. 
\end{enumerate} 

It will frequently be helpful to think of a variety as the \textit{graph} of a function on a two-dimensional plane. This is achieved in the following way: one breaks the analysis into points in the 2-dimensional plane $[1:y:z]$, in which the homogeneous equation $F(x,y,z) = 0$ can be rewritten as $y = f(z)$, and then considering the points at infinity separately. (Of course, normalising as $[x:y:1]$ would move a different line to infinity and give a different view of the same variety.) 

Recall that a tangent to a curve is a line meeting the curve (locally) at a single point. The tangent to a curve is routinely found by linearising the curve (which is achieved over $\mathbb{R}$ by computing a normal vector using partial derivatives and taking the orthogonal complement). To a surprising extent, the geometric intuition over $\mathbb{R}$ which is the foundation for calculus carries through to finite fields, though there is no longer any convergence (and so $\epsilon-\delta$ arguments no longer hold). In this note we take formal derivatives over finite fields. The ordinary formulae continue to hold, though they require an entirely different justification, which would lead us too far from the topic of the paper. We justify this by claiming that these methods are provided for illustration, and are not required in the proof of the main theorem. In any case, we define a \textit{tangent} to a variety over a finite field to be a line meeting the variety in a unique point. 

\begin{proposition} 
Let $F(x,y,z)$ be a homogeneous function, and $\mathcal{V}$ its variety in projective space. 
The \textit{tangent space} to $F$ at $v \in \mathcal{V}$ is the orthogonal complement to the \textit{normal vector} 
$\nabla F = [\frac{\partial F}{\partial x} : \frac{\partial F}{\partial y} :\frac{\partial F}{\partial z} ]$ evaluated at $v$ (with respect to the standard inner product). 
\end{proposition}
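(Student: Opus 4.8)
The plan is to restrict $F$ to an arbitrary line through $v$ and extract the tangency condition from the linear term of a formal Taylor expansion. Fix a nonzero vector $v$ representing the given projective point, so $F(v)=0$, and let $\ell$ be any projective line through $v$. Choosing a vector $u$ with $[u]\in\ell\setminus\{v\}$, we have $\ell=\{[sv+tu]:(s,t)\neq(0,0)\}$, and the restriction $g(t):=F(v+tu)$ is a single-variable polynomial of degree at most $k=\deg F$ (with leading coefficient $F(u)$) satisfying $g(0)=F(v)=0$. The $\mathbb{F}_q$-points of $\ell$ near $v$ that lie on $\mathcal{V}$ are exactly the roots $t\neq 0$ of $g$, so $\ell$ is tangent to $\mathcal{V}$ at $v$ precisely when $t=0$ is a root of $g$ of multiplicity at least two.

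Next I would invoke the multivariable Taylor expansion of $F$ about $v$, which for polynomials is a purely algebraic identity valid over any field:
\[ F(v+tu)=F(v)+t\sum_{i} u_i\,\frac{\partial F}{\partial x_i}(v)+t^2(\cdots). \]
The coefficient of $t$ involves no division, so it is unambiguous in every characteristic, and it equals $g'(0)=\nabla F(v)\cdot u$ by the chain rule for formal derivatives. Hence $t=0$ is a repeated root of $g$ — i.e. $\ell$ is tangent at $v$ — if and only if $\nabla F(v)\cdot u=0$. Therefore the set of tangent directions at $v$ is exactly $\{w:\nabla F(v)\cdot w=0\}$, the orthogonal complement of $\nabla F(v)$.

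It remains to see that this orthogonal complement really is a line through $v$. That it contains $v$ follows from Euler's identity $x\frac{\partial F}{\partial x}+y\frac{\partial F}{\partial y}+z\frac{\partial F}{\partial z}=kF$, which one checks termwise on monomials and which is therefore valid over any field; evaluating at $v$ gives $\nabla F(v)\cdot v=kF(v)=0$. Provided $\nabla F(v)\neq 0$ — that is, $v$ is a smooth point of $\mathcal{V}$ — its orthogonal complement is a two-dimensional subspace, i.e. a projective line, as claimed; at a point where all three partials vanish there is no well-defined normal vector, and no tangent line.

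The main point needing care is the precise meaning of ``tangent''. Our working definition declares a tangent to be a line meeting $\mathcal{V}$ in a unique point, whereas the computation above detects a repeated root of $g$. Over a finite field these can differ: a line may meet $\mathcal{V}$ in one $\mathbb{F}_q$-point merely because its other intersection points are not $\mathbb{F}_q$-rational, or because $g$ has degree strictly less than $k$. I would resolve this by observing that the two notions coincide in the only case we actually use, a non-degenerate conic $k=2$: choosing $u$ with $F(u)\neq0$ makes $g$ quadratic, and then a second $\mathbb{F}_q$-point on $\ell$ is avoided exactly when $t=0$ is a double root. For higher-degree varieties the orthogonal-complement description should simply be taken as the definition of the tangent space. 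As noted in the text, this proposition is included for illustration and is not needed for Segre's theorem, so this level of rigour is appropriate.
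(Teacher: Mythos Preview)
Your argument is correct and self-contained: the Taylor expansion identifies the linear term of $g(t)=F(v+tu)$ with $\nabla F(v)\cdot u$, Euler's identity places $v$ in the orthogonal complement, and you correctly flag the distinction between ``unique intersection point'' and ``repeated root'' while checking they agree for non-degenerate conics.

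The comparison with the paper is short: the paper gives \emph{no proof} of this proposition. It is stated, immediately followed by ``This is perhaps best illustrated by an example,'' and then Example~4 verifies the tangent at one point of $x^{2}-yz$ by direct substitution. The surrounding text explicitly disclaims any justification of formal derivatives over finite fields, saying such justification ``would lead us too far from the topic of the paper'' and that ``these methods are provided for illustration, and are not required in the proof of the main theorem.'' So the proposition functions in the paper essentially as a definition-cum-assertion, not as a proved result. Your write-up therefore supplies strictly more than the paper does; if anything, you have been more careful than necessary for the paper's purposes, and your closing remark acknowledging this is apt.
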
 

This is perhaps best illustrated by an example. 

\begin{example} 
Consider the conic $F(x,y,z) = x^{2} - yz$ over a field with at least $3$ elements. 
Normalising the $z$ co-ordinate shows that this is just the standard quadratic function $y = x^{2}$, 
completed by the point $[0:1:0]$ at infinity.

The normal vector to $F$ is given by $\nabla F = [\frac{\partial F}{\partial x} : \frac{\partial F}{\partial y} :\frac{\partial F}{\partial z} ]= [2x :-y : -z]$, and the tangent line at a point is the orthogonal complement of this vector. 

For example, the point $p = [1:1:1]$ is in the variety of $F$ by inspection. The normal vector at this point is $[2:-1:-1]$ which is orthogonal to, for example, $[0:1:-1]$. Thus a parametrisation of the tangent line at $p$ is given by $T_{p} = p+t[0:1:1] = [1: 1+t : 1-t]$. It is easily verified that $T_{p}$ is tangent to the variety: substituting a generic point on the line into $F(x,y,z)$ gives the equation 
\[ 1 - (1+t)(1-t) = t^{2} = 0\] 
which has a unique solution. Hence $T_{p}$ intersects the variety only at $p$. 
\end{example}

The choice of vector orthogonal to $\nabla F(x,y,z)$ is far from unique. Nevertheless, the resulting tangent line is unique (provided the defining equation satisfies technical conditions which will always be met in this paper), though the parameterisation may not be. Considering the tangent line as a two-dimensional subspace in the underlying three-dimensional vector space and reflecting on the non-uniqueness of bases for such spaces may assist the reader.

\begin{definition} 
A \textit{conic} in projective space is the locus of points of a homogeneous polynomial of degree $2$. 
A conic is \textit{non-degenerate} if it is non-empty and does not contain an entire projective line. 
\end{definition} 

We note that this is a purely algebraic description of a conic, though motivated by the geometry of the 
real field.  

\begin{proposition} \label{conicpoints}
A non-degenerate conic in $\textrm{PG}_{2}(\mathbb{F}_{q})$ contains $q+1$ points.
A non-degenerate conic meets a line in at most two points.
\end{proposition}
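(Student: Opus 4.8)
The plan is to prove the "line meets a conic in at most two points" statement first, then bootstrap it to the point count. For the first part, let $C$ be the variety of a homogeneous degree-two polynomial $F$ and let $\ell$ be any projective line. Choose two distinct points $p, q$ spanning $\ell$ and parametrise $\ell$ as $\{p + tq : t \in \mathbb{F}_q\} \cup \{q\}$ (thinking of $\ell$ as a two-dimensional subspace with basis $p,q$). Substituting the generic point $p+tq$ into $F$ and using bilinearity of the associated symmetric form, the condition $F(p+tq)=0$ becomes a polynomial equation in $t$ of degree at most $2$, namely $F(p) + t\,B(p,q) + t^2 F(q) = 0$ where $B$ is the polar form. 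A nonzero polynomial of degree at most $2$ over a field has at most $2$ roots; together with possibly the point $q$ (the "$t=\infty$" point), one must check the bookkeeping, but the total is at most $2$ unless the polynomial is identically zero in $t$, which forces all of $\ell$ into $C$, contradicting non-degeneracy. That disposes of the second sentence and also shows that a line is either a secant ($2$ points), a tangent ($1$ point), or external ($0$ points).

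For the point count, I would use a standard double-counting / projection argument. Fix a point $p \in C$ (nonempty by hypothesis). Every other point of $C$ lies on exactly one of the $q+1$ lines through $p$. By the first part, each such line contains at most one point of $C$ other than $p$ itself, so $|C| \le 1 + (q+1) = q+2$. To get the exact value I would instead argue that at most one line through $p$ is tangent at $p$: if two distinct lines through $p$ each met $C$ only at $p$, I would derive a contradiction, most cleanly by reducing to the affine picture $y = f(z)$ described in the excerpt, where $F$ restricted to the plane $[1:y:z]$ becomes a genuine quadratic and classical facts about quadratics (a quadratic in one variable, or a conic given as a graph) pin down the tangent behaviour. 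Granting that exactly one line through $p$ is tangent, the remaining $q$ lines through $p$ are each secant, contributing exactly one new point apiece, giving $|C| = 1 + q = q+1$.

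The cleanest route for the exact count may actually be to pick coordinates adapted to $p$: after a linear change of variables (which is a projective collineation and preserves everything), assume $p = [0:1:0]$ and that a convenient line is at infinity, so that on the affine plane the conic is $y = f(z)$ for a polynomial $f$ of degree $\le 2$. Non-degeneracy will force $\deg f = 2$ (degree $\le 1$ would make the conic a line or reducible), so $f(z) = az^2 + bz + c$ with $a \ne 0$; completing the square and using that $z \mapsto z^2$ is two-to-one on $\mathbb{F}_q^\times$ for $q$ odd shows the affine conic has exactly $q$ points, and one checks the line at infinity meets $C$ in exactly one further point. Either way, $|C| = q+1$.

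The main obstacle is the exact count rather than the upper bound: the inequality $|C| \le q+2$ is immediate, but ruling out the "extra" point — equivalently, showing a conic cannot have two tangent lines through a single of its points, or handling the case $q$ even where $z\mapsto z^2$ is a bijection — needs the slightly more careful normal-form analysis. I would also need to be mildly careful that after the change of coordinates $F$ is still an honest degree-two polynomial (the leading form does not degenerate), and that "non-degenerate" in the sense of the definition (non-empty, containing no line) is exactly what is needed to force $\deg f = 2$ in the affine picture.
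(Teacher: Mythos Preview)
Your overall architecture matches the paper's: prove ``a line meets the conic in at most two points'' by substituting a parametrised line into $F$ and bounding roots of a degree-$\le 2$ polynomial, then count points by looking at the pencil of $q+1$ lines through a fixed $p\in C$ and arguing that exactly one of them is tangent. That is precisely the paper's strategy.

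The real difference is in how the uniqueness of the tangent at $p$ is obtained. The paper does this in one stroke via the gradient: for a non-degenerate conic the normal vector $\nabla F = [2\alpha x+\delta y+\epsilon z:\,2\beta y+\delta x+\zeta z:\,2\gamma z+\epsilon x+\zeta y]$ is nonzero at every point of the variety, and its (projective) orthogonal complement is the unique tangent line. With that in hand, the remaining $q$ lines through $p$ are non-tangent, hence each meets $C$ in a second point, and the degree bound shows that second point is unique; done. Your route instead passes through an affine normal form, and as written it has gaps: setting $x=1$ in a general conic yields $\alpha + \beta y^{2} + \gamma z^{2} + \delta y + \epsilon z + \zeta yz = 0$, which is \emph{not} in general solvable as $y=f(z)$; you would first need to arrange $\beta=0$ by a suitable change of coordinates, which is exactly the kind of quadratic-form classification you were hoping to avoid. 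Moreover, once you \emph{do} have a graph $y=f(z)$, the count of affine points is trivially $q$ (one $y$ for each $z$), so the two-to-one property of $z\mapsto z^{2}$ is irrelevant there; that remark suggests you were implicitly thinking of a different normal form such as $y^{2}=g(z)$. None of this is fatal, but the paper's gradient argument sidesteps all of it and works uniformly in odd and even characteristic.
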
 

\begin{proof}
The generic equation for a conic in $\textrm{PG}_{2}(\mathbb{F}_{q})$ is $F(x,y,z) = \alpha x^{2} + \beta y^{2} + \gamma z^{2} + \delta xy + \epsilon xz + \zeta yz$. The normal vector is $\nabla F(x,y,z) = [2\alpha + \delta y + \epsilon z : 2\beta + \delta x + \zeta z : 2\gamma + \epsilon x + \zeta y]$, which is linear in $x,y,z$. A conic is non-degenerate if and only if $\nabla F(x,y,z)$ is non-zero for all $[x:y:z]$ in the corresponding variety. In this case, the normal vector uniquely determines a one-dimensional subspace in the underlying three dimensional  vector space. This has a unique two-dimensional orthogonal complement, which is the tangent vector to the  projective variety. 

For the second claim, let $p$ be a point on the conic $F$. 
Since there is a unique tangent to the curve at $p$, and there 
are precisely $q+1$ lines through $p$, each of the $q$ remaining 
lines through $p$ must intersect the conic in additional points. 
A line through $p$ is described by a linear equation, and the conic by a 
quadratic equation: substituting one into the other gives a polynomial 
in one variable of degree at most $2$. One root corresponds to $p$, and so there is at most 
one additional point of intersection.  
\end{proof} 
\begin{figure}[h]
\centering
  \includegraphics[scale=.2]{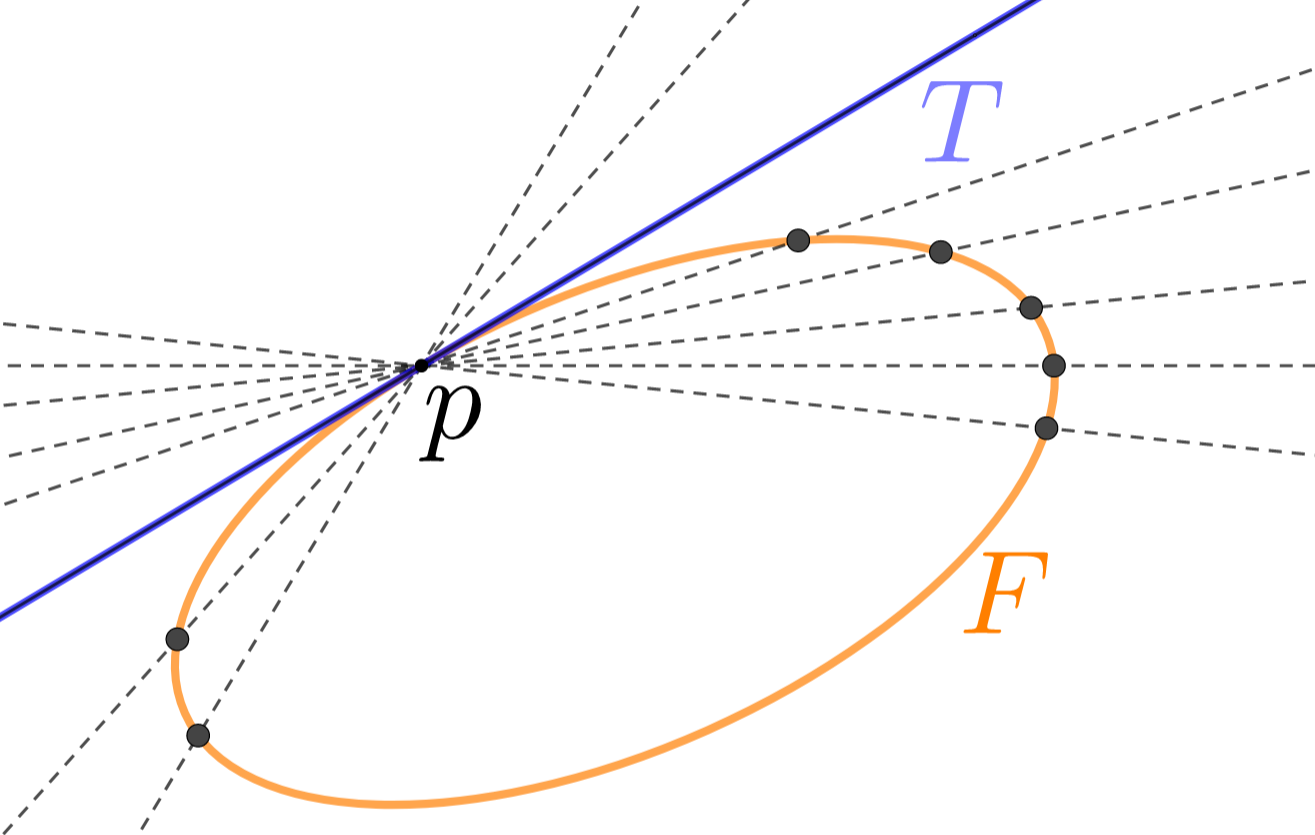}
  \caption{A conic $F$, with tangent $T$ at the point $p$ along with a pencil of lines at $p$ and their intersection points.}
  \label{conic}
\end{figure}

Again, Proposition \ref{conicpoints} is best illustrated with an example. Before this example we remark on a technique that will be used repeatedly.

\begin{remark}
 The determinant of a matrix whose rows are the coordinates of three points vanishes if and only if these points are collinear. This is analogous to three points being coplanar in $\mathbb{R}^3$ if and only if the determinant of the matrix of coordinates vanishes.
\end{remark}

\begin{example} \label{Ex2}
Consider again the conic $F(x,y,z) = x^{2} - yz$ over a field with at least $3$ elements. 
Previously, we computed the tangent at $[1:1:1]$. Let us now construct additional points on 
the curve. 

Any line through $p$ can be written parametrically as $[1+\alpha t : 1 + \beta t : 1 + \gamma t]$. The lines corresponding to 
$(\alpha, \beta, \gamma)$ and $(\alpha', \beta', \gamma')$ are distinct if and only if the matrix 
\[ \begin{pmatrix} 1 & 1 & 1 \\ \alpha & \beta & \gamma \\ \alpha' & \beta' & \gamma' \end{pmatrix} \] 
is invertible. Let us compute the second point at which the line $[ 1 + t : 1 + t: 1]$ meets the curve: 
\[ (1+t)^{2} - (1+t) = 0 \Rightarrow t^{2} + t = 0\,. \] 
The solution $t = 0$ corresponds to $p$, while the solution $t = -1$ corresponds to the point $[0 : 0 :1]$ on $F$.
In this way, every point on $F$ can be constructed by computing solutions of simple systems of equations. 
\end{example} 

The reader is encouraged at this point to verify that the tangent to the conic of Example \ref{Ex2} at $q = [0:0:1]$ is the line $\{ [1:0:t] : t \in k\} \cup \{ [0:0:1]\}$. Furthermore, the line $x = \alpha y$ contains $q$ for any non-zero $\alpha$, and the second point of intersection of $[t: \alpha t: 1]$ with the variety is given by 
\[ t^{2} - \alpha t = 0 \] 
which occurs when $t = \alpha$. Hence the points on the curve admit the parametrisation $[\alpha : \alpha^{2} : 1]$ together with a point `at infinity' with respect to this parameterisation, which is $[0:1:0]$. 

In contrast to the definition of a conic, the next definition is purely combinatorial. 

\begin{definition} 
An \textit{oval} in a projective plane is a subset of the points of the plane meeting no line in more than $2$ points. 
\end{definition} 

\begin{proposition} \label{ovalbound}
An oval in a projective plane of order $q$ contains at most $q+2$ points if $q$ is even and $q+1$ points if $q$ is odd. 
\end{proposition}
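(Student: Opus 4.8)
The plan is to first establish the weaker bound $|\mathcal{O}|\le q+2$ by a counting argument valid for every $q$, and then to sharpen it to $q+1$ when $q$ is odd by a parity argument.

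For the first step, fix a point $p$ of the oval $\mathcal{O}$. There are exactly $q+1$ lines through $p$, and every other point of $\mathcal{O}$ lies on precisely one of them, namely the unique line joining it to $p$. Since each line meets $\mathcal{O}$ in at most two points, each line through $p$ contributes at most one point of $\mathcal{O}\setminus\{p\}$. Hence $|\mathcal{O}|-1\le q+1$, that is, $|\mathcal{O}|\le q+2$. This already proves the claim in the even case.

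Now suppose $q$ is odd and, for contradiction, that $|\mathcal{O}|=q+2$. Running the previous count again, equality forces every one of the $q+1$ lines through a point $p\in\mathcal{O}$ to meet $\mathcal{O}$ in a second point; in other words, there is no tangent line to $\mathcal{O}$ through any point of $\mathcal{O}$, so every line of the plane meets $\mathcal{O}$ in $0$ or $2$ points. Since the plane has $q^{2}+q+1>q+2$ points, we may choose a point $r$ not on $\mathcal{O}$. The $q+1$ lines through $r$ partition the points of $\mathcal{O}$, since each point of $\mathcal{O}$ lies on the unique line joining it to $r$, and each such line contains $0$ or $2$ points of $\mathcal{O}$. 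Therefore $|\mathcal{O}|$ is even, contradicting $|\mathcal{O}|=q+2$ with $q$ odd.

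The computations involved are entirely routine; the only genuine idea is the second step, and the point to watch is the passage from ``$\mathcal{O}$ has no tangents'' to the parity obstruction, which requires examining the pencil of lines through a point \emph{off} the oval rather than one on it. One should also confirm that such a point exists, which is immediate from the cardinality of the plane.
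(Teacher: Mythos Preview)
Your proof is correct and follows essentially the same approach as the paper: first bound $|\mathcal{O}|\le q+2$ by counting the lines through a point $p\in\mathcal{O}$, then obtain the parity obstruction by counting the lines through a point $r\notin\mathcal{O}$ when $|\mathcal{O}|=q+2$. Your version is slightly more careful in verifying that such an $r$ exists and in spelling out the partition argument, but the ideas are identical.
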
 

\begin{proof} 
Denote the oval by $\mathcal{O}$, let $p \in \mathcal{O}$ and $r \notin \mathcal{O}$. 
Each line through $p$ can intersect the oval in at most one additional point, hence there at 
most $q+2$ points on the oval. If there are $q+2$ points in $\mathcal{O}$ then every line 
intersecting the oval must do so in two points, there can be no tangents to $\mathcal{O}$. 

Suppose now that $\mathcal{O}$ contains $q+2$ points, and consider the lines through $r$ 
which intersect $\mathcal{O}$. Since each contains precisely two points, the quantity $q+2$, 
and hence $q$, must be even. Consequently, when $q$ is odd, an oval contains at most $q+1$ 
points. 
\end{proof} 

Following immediately from Propositions \ref{conicpoints} and \ref{ovalbound}, we have many examples of maximal ovals in projective planes of odd order. 

\begin{corollary} 
A conic in a projective plane of odd order is a maximal oval. 
\end{corollary}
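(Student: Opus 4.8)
The plan is to assemble the corollary directly from the two preceding propositions, since all the substantive work has already been done. First I would note that the word ``conic'' here must be read as ``non-degenerate conic'': a degenerate conic may be empty or may contain an entire projective line, and a line carries $q+1 > 2$ points, so such a set fails the defining condition of an oval. With that understood, the first step is to observe that a non-degenerate conic \emph{is} an oval. This is immediate from the second assertion of Proposition~\ref{conicpoints}: a non-degenerate conic meets every line in at most two points, which is precisely the definition of an oval.

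Next I would invoke the size count. By the first assertion of Proposition~\ref{conicpoints}, a non-degenerate conic in $\textrm{PG}_2(\mathbb{F}_q)$ contains exactly $q+1$ points. On the other hand, Proposition~\ref{ovalbound} tells us that when $q$ is odd, \emph{every} oval of the plane contains at most $q+1$ points. Combining these, the conic is an oval attaining the largest possible size, hence it cannot be properly contained in any strictly larger oval; equivalently, it is a maximal oval. This completes the argument. (It is worth remarking that the hypothesis of odd order is essential here: for even $q$ the bound of Proposition~\ref{ovalbound} is $q+2$, and a conic can in fact be extended to a larger oval.)

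There is essentially no obstacle to overcome: the corollary is a bookkeeping consequence of Propositions~\ref{conicpoints} and~\ref{ovalbound}. The only points demanding a moment's care are the one flagged above — ensuring the hypothesis is interpreted as non-degeneracy — together with confirming that ``maximal'' is being used in the sense of ``not contained in a strictly larger oval'', which, since the upper bound $q+1$ is met, coincides here with ``of maximum cardinality''.
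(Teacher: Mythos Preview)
Your argument is correct and matches the paper's own treatment: the corollary is stated there as following immediately from Propositions~\ref{conicpoints} and~\ref{ovalbound}, with no further proof given. Your added remarks on non-degeneracy and on the even-order case are accurate and appropriate elaborations.
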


Our goal in this paper is to prove Segre's theorem, which is the converse of this corollary: every maximal oval in a finite projective plane of odd order is actually a conic. The reader may be tempted to think that this converse would be natural to believe, however many prominent researchers in the area did not think that it was true.  It was first conjectured by J\"arnefelt and Kustaanheimo but Marshall Hall said in his review that he found it implausible, \cite{JarnKust}. Later Hall reviewed Segre's paper saying that the method of proof was ingenious.

Segre's result is the best possible in the sense that there exist maximal ovals with $q+2$ points in finite planes of even order, not all of which can be constructed from conics. The study of such maximal ovals in planes of even order was a key step in the proof of the non-existence of the projective plane of order 10, \cite{LamPlane10}. The problem over infinite fields does not appear amenable to classification.

\section{The Desargues configuration} 

Historically, Desargues constructed and worked with projective planes constructed from 
three dimensional vector spaces. Desargues' theorem is a statement about the collinearity of 
points lying in a particular configuration. Much later, it was realised that combinatorial 
objects satisfying the axioms of a projective plane exist. It turns out that the conclusion of 
Desargues' theorem typically does \textbf{not} hold in these exotic planes, and in fact, the 
validity of Desargues' theorem is a necessary and sufficient condition for co-ordinatisation  
of a plane by a field. Thus different authors can refer to quite different statements when 
they write \textit{Desargues' theorem}. We refer the reader to an elementary and historically 
motivated account of this topic by Blumenthal \cite{Blumenthal}. In this section, we present the result as it would have 
been understood by Desargues: that is, in a plane co-ordinatised by a field. Our proof is via 
linear algebra, and we spend some time illustrating its application, since it will be required in Segre's theorem. 

\begin{definition} 
A \textit{triangle} is a collection of three non-collinear points in a projective plane. 
Let $P = \{p_{1}, p_{2}, p_{3}\}$ and $Q = \{q_{1}, q_{2}, q_{3}\}$ be two triangles in a projective plane.
If the lines $|p_{1}q_{1}|$ and $|p_{2}q_{2}|$ and $|p_{3}q_{3}|$ intersect in a point, then $P$ and $Q$ 
are \textit{in perspective from a point}. This point is called the \textit{center of perspectivity}. 
\end{definition} 

\begin{figure}[h!]
\centering
  \includegraphics[scale=.3]{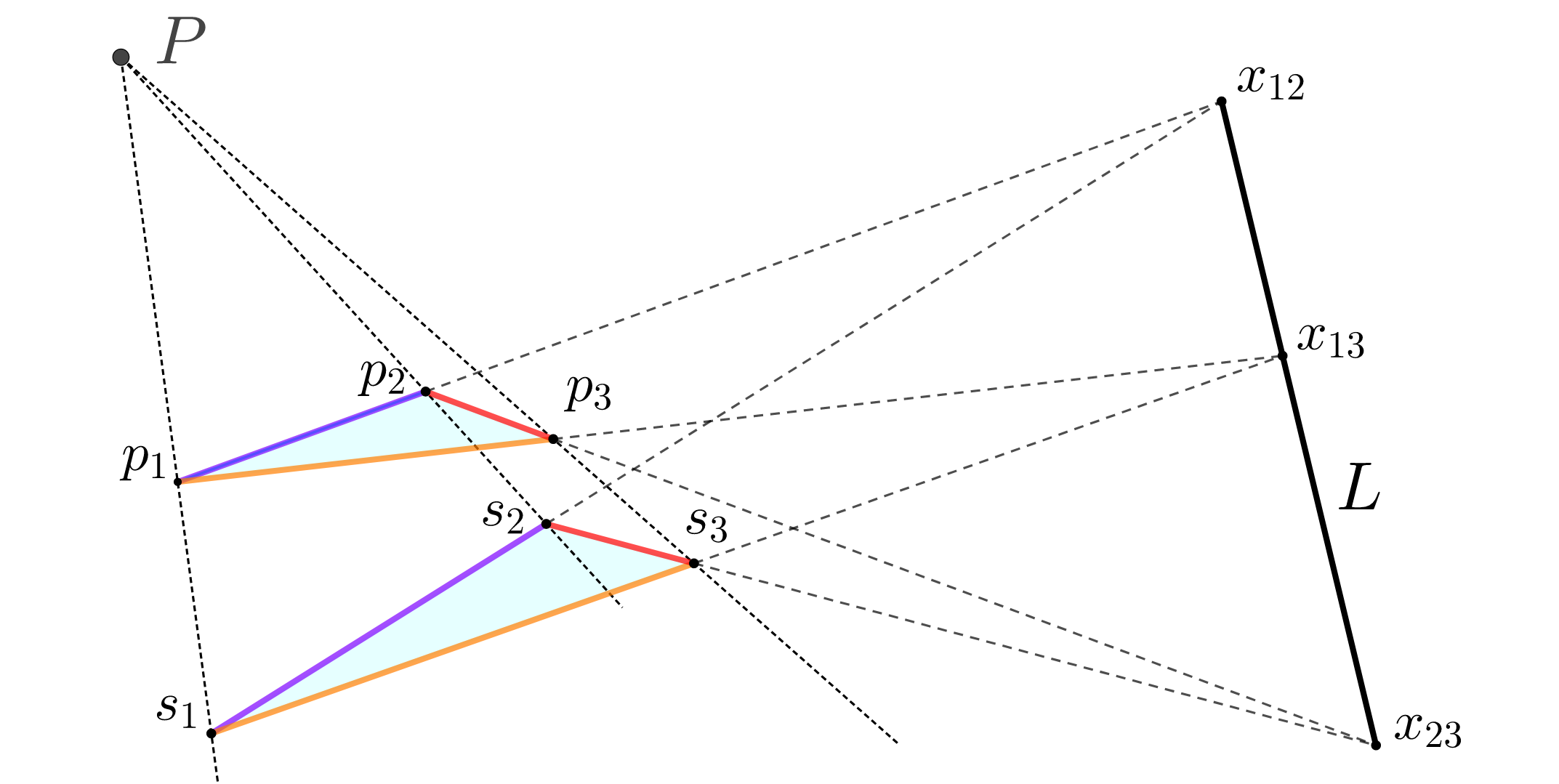}
  \caption{Triangles $p_1p_2p_3$ and $s_1s_2s_3$ are in perspective. The point $P$ being the centre of perspectivity and $L$ the line of perspectivity. The points $x_{ij}$ show the construction of the line $L.$}
  \label{desargues}
\end{figure}

Desargues' theorem states, that in a Desarguesian projective plane, two triangles in perspective from
a point \textbf{must} satisfy an additional condition. Geometrically, the intersection points of congruent 
sides of the triangle must be collinear. Algebraically, this is expressed as the vanishing of a certain 
determinant. The following proof is the ninth provided by Tan in an article surveying proof techniques for this result, \cite{TanDesargues}.

\begin{theorem}[Desargues' theorem]
Let $P = \{ p_{1}, p_{2}, p_{3}\}$ and $S = \{  s_{1}, s_{2}, s_{3}\}$ be triangles in perspective in 
a projective plane. Denote by $x_{ij}$ the intersection of the congruent sides $|p_{i}p_{j}|$ and $|s_{i}s_{j}|$ of the two triangles.

Then the points $x_{12}$, $x_{13}$ and $x_{23}$ are collinear.  
\end{theorem}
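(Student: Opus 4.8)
The plan is to pass to the underlying three-dimensional vector space and exploit the freedom to rescale the vectors representing projective points. Write $O$ for the centre of perspectivity, the common point of the lines $|p_1s_1|$, $|p_2s_2|$, $|p_3s_3|$, and fix a representative vector $\mathbf{o}$ for it. For each $i$ the point $O$ lies on the line $|p_is_i|$ and (generically) is distinct from both endpoints, so $\mathbf{o}$ is a linear combination of representatives of $p_i$ and $s_i$ with both coefficients nonzero; absorbing those coefficients into the representatives, I would choose vectors $\mathbf{p}_i$ of $p_i$ and $\mathbf{s}_i$ of $s_i$ with
\[ \mathbf{o} = \mathbf{p}_1 + \mathbf{s}_1 = \mathbf{p}_2 + \mathbf{s}_2 = \mathbf{p}_3 + \mathbf{s}_3. \]
This simultaneous normalisation is the heart of the argument.

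Next I would read off coordinates for the three intersection points from these equations. From $\mathbf{p}_1 + \mathbf{s}_1 = \mathbf{p}_2 + \mathbf{s}_2$ one gets $\mathbf{p}_1 - \mathbf{p}_2 = \mathbf{s}_2 - \mathbf{s}_1$. The left-hand side lies in the plane spanned by $\mathbf{p}_1$ and $\mathbf{p}_2$, that is, the two-dimensional subspace corresponding to the line $|p_1p_2|$, while the right-hand side lies in the subspace corresponding to $|s_1s_2|$; this common vector is nonzero since $p_1 \neq p_2$, so it represents the unique point $x_{12}$ in which the two lines meet. By symmetry, $\mathbf{x}_{12} = \mathbf{p}_1 - \mathbf{p}_2$, $\mathbf{x}_{13} = \mathbf{p}_1 - \mathbf{p}_3$ and $\mathbf{x}_{23} = \mathbf{p}_2 - \mathbf{p}_3$ are representatives of $x_{12}$, $x_{13}$ and $x_{23}$.

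Finally I would observe the identity
\[ \mathbf{x}_{12} - \mathbf{x}_{13} + \mathbf{x}_{23} = (\mathbf{p}_1 - \mathbf{p}_2) - (\mathbf{p}_1 - \mathbf{p}_3) + (\mathbf{p}_2 - \mathbf{p}_3) = \mathbf{0}. \]
This is a nontrivial linear relation among the three representatives, so the $3 \times 3$ matrix whose rows are the homogeneous coordinates of $x_{12}$, $x_{13}$, $x_{23}$ has vanishing determinant; by the determinant criterion for collinearity recorded earlier, the three points are collinear, as claimed.

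The step that needs care is the legitimacy of the normalisation. It can fail only when $O$ coincides with one of the six vertices, and similarly $x_{ij}$ fails to be a single point only when the congruent sides $|p_ip_j|$ and $|s_is_j|$ coincide as lines. Each such degeneracy forces a coincidence among $O$ and the vertices, which either contradicts the hypothesis that $P$ and $S$ are triangles or makes the collinearity conclusion vacuous, so I would dispose of these cases by a short separate check and let the computation above carry the substantive content of the theorem. The only genuine obstacle is this bookkeeping; once the configuration is non-degenerate the linear algebra is immediate.
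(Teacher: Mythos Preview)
Your argument is correct, and it is in fact the classical coordinate-free proof of Desargues' theorem. The paper, however, proceeds differently: it fixes explicit homogeneous coordinates, placing the centre of perspectivity at $[1:1:1]$ and the vertices $p_1,p_2,p_3$ at the standard basis points, then parameterises each $s_i$ along the line $|p_ic|$ as $[1+t_1:1:1]$, etc. The intersection points $x_{ij}$ are computed one by one as solutions of pairs of linear equations (via the vanishing of a $3\times3$ determinant), and collinearity is checked by observing that $(1,1,1)$ lies in the nullspace of the matrix whose columns are $x_{12},x_{23},x_{13}$. Your route buys brevity and elegance: once the normalisation $\mathbf{o}=\mathbf{p}_i+\mathbf{s}_i$ is in place, everything falls out in two lines with no computation. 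The paper's route, though longer, is chosen deliberately: it rehearses exactly the determinant-based tools---writing lines by rank-deficiency conditions, intersecting them by solving linear systems, testing collinearity by a vanishing determinant---that are deployed again in the Lemma of the Tangents and in the proof of Segre's theorem, so it serves a pedagogical purpose that your slicker argument does not.
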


\begin{proof} 
By hypothesis, the triangles $P$ and $Q$ are in perspective from a point, which we may choose without loss of generality as $c = [1:1:1]$. Again without loss of generality, we may label the points of one triangle as $p_{1} = [1:0:0]$, and $p_{2} = [0:1:0]$ and $p_{3} = [0:0:1]$.  

By hypothesis, the point $s_{i}$ is on the line through $p_{i}$ and $c$, so $s_{1} = [1+t_{1}:1:1]$ and $s_{2} = [1:1+t_{2}:1]$, and $s_{3} = [1:1:1+t_{3}]$. The intersection of two lines is most conveniently computed as the simultaneous solution of their linear equations. To find the line through $s_{1}$ and $s_{2}$, it suffices to compute conditions under which the unknowns $x_{1}, x_{2}, x_{3}$ render the following matrix rank deficient:
\[ \begin{pmatrix} x_{1} & x_{2} & x_{3} \\ 1+t_{1} & 1 & 1 \\ 1 & 1+t_{2} & 1 \end{pmatrix} \,. \]
The necessary and sufficient condition, given by the vanishing of the determinant, is $t_{2}x_{1} + t_{1}x_{2} - (t_{1}t_{2} + t_{1} + t_{2})x_{3} = 0$. The line through $p_{1}$ and $p_{2}$ is given by the equation $x_{3} = 0$, and the intersection of these lines is the projective point $x_{12} = [t_{1}: -t_{2} : 0]$. Similar computations yield $x_{13} = [-t_{1}:0:t_{3}]$ and $x_{23} = [0 : t_{2} : -t_{3}]$. These points are collinear if and only if the corresponding matrix, in which points are written as columns,
\[ \begin{pmatrix} t_{1} & 0 & -t_{1} \\ -t_{2} & t_{2} & 0 \\ 0 & -t_{3} & t_{3} \end{pmatrix} \] 
is rank deficient. But it is easily seen that the column-vector $(1,1,1)$ is in the nullspace, which completes the proof. 
\end{proof} 
 
We illustrate this theorem with an example, which will be required in the proof Segre's theorem. 

\begin{example} 
Suppose that $p_{1} = [1:0:0]$ and $p_{2} = [0:1:0]$ and $p_{3} = [0:0:1]$. 
The lines of the triangle are then $\ell_{12} = [1:t:0]$ with equation $x_{3} = 0$ 
as well as $\ell_{13}$ and $\ell_{23}$ with equations $x_{2} = 0$ and $x_{1} = 0$ respectively. 

Take $P_{1} = [-1:1:1]$ and $P_{2} = [1:-1:1]$ and $P_{3} = [1:1:-1]$, which is in perspective 
with the first triangle through the center of perspectivity $[1:1:1]$. Its lines are $L_{12} = [-1+t :  1-t : 1+t] = [1:-1:t]$ 
with equation $x_{1} + x_{2} = 0$. Similarly $L_{13}$ has equation $x_{1} + x_{3} = 0$ and $L_{23}$ has equation $x_{2} + x_{3} = 0$. 

The intersection of $\ell_{1}$ with $L_{1}$ is the point $[1:-1:0]$, the intersection of $\ell_{2}$ with $L_{2}$ is $[0:1:-1]$ 
and the intersection of $\ell_{3}$ with $L_{3}$ is $[-1:0:1]$. Writing these vectors as columns, we perceive that the matrix
\[ \begin{pmatrix} 1 & 0 & -1 \\ -1 & 1 & 0 \\ 0 & -1 & 1 \end{pmatrix} \] 
is rank-deficient, which means that all three points are contained on a projective line, with equation $x+y+z = 0$. 
\end{example}

\section{The main theorem}

With the necessary background in hand, we proceed to the proof of Segre's theorem. 
The key step is the famous \textit{Lemma of the Tangents}, 
which proves that a particular pair of triangles constructed from a conic is in perspective. 
The main result then applies Desargues' theorem to deduce an algebraic relation between 
the points on an oval from this configuration. Our proof of Lemma \ref{lemTangents} is essentially 
Segre's, while our proof of Theorem \ref{SegreThm} departs from the original in some details while 
preserving the essential argument. (Segre achieved his proof without mention of Desargues, but required a 
result of Qvist on intersecting tangents for example.) 

\begin{lemma} \label{lemTangents}
Let $p_{1}, p_{2}, p_{3}$ be three distinct points on an oval in $PG_{2}(\mathbb{F}_{q})$ where $q$ is an odd prime power.
Define $s_{i}$ to be the intersection point of the tangents to the oval at $p_{i+1}$ and $p_{i+2}$, 
with subscripts interpreted modulo $3$. The triangles 
 $P = \{p_{1}, p_{2}, p_{3}\}$ and $S = \{s_{1}, s_{2}, s_{3}\}$ are in perspective. 
\end{lemma}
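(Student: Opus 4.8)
The plan is to set up explicit projective coordinates adapted to the three points of the oval and then compute the three tangent lines, their pairwise intersections $s_i$, and finally the three lines $|p_i s_i|$, showing that they are concurrent. By the third projective-plane axiom and the fact that projective transformations act transitively on frames, I would choose coordinates so that $p_1 = [1:0:0]$, $p_2 = [0:1:0]$, $p_3 = [0:0:1]$. (These are non-collinear since any three points of an oval are non-collinear --- a line meets the oval in at most two points.) The key observation, already developed in the examples of Section 1, is that the tangent line to the oval at $p_i$ is a line through $p_i$, and a line through, say, $p_1 = [1:0:0]$ meeting neither $p_2$ nor $p_3$ must have an equation of the form $bx_2 = cx_3$ with $b, c$ both nonzero; normalising, the tangent at $p_1$ can be written as the line with equation $x_2 = a_1 x_3$ for some nonzero scalar $a_1$, i.e.\ spanned by $[1:0:0]$ and $[0:a_1:1]$. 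Similarly the tangent at $p_2$ has equation $x_3 = a_2 x_1$ and the tangent at $p_3$ has equation $x_1 = a_3 x_2$, for nonzero scalars $a_1, a_2, a_3$.

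Next I would intersect these tangent lines in pairs. The point $s_1$ is the intersection of the tangents at $p_2$ and $p_3$, i.e.\ the simultaneous solution of $x_3 = a_2 x_1$ and $x_1 = a_3 x_2$, which is $s_1 = [a_3 : 1 : a_2 a_3]$; cyclically, $s_2 = [a_1 a_2 : a_1 : 1]$ and $s_3 = [1 : a_2 a_3 : a_3]$ --- though I would recompute these carefully with a fixed cyclic convention rather than trust the indices here. Now the line $|p_1 s_1|$ joins $[1:0:0]$ to $[a_3:1:a_2a_3]$, which is the line with equation $a_2 a_3 x_2 = x_3$ (the span of those two vectors); cyclically one gets $|p_2 s_2|$ and $|p_3 s_3|$. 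The three triangles-in-perspective claim then reduces, via the determinant criterion recalled in the Remark, to checking that the $3\times 3$ matrix whose rows are the coefficient vectors of these three lines is rank deficient. The determinant will be a polynomial in $a_1, a_2, a_3$, and the whole point of Segre's Lemma of the Tangents is that it need not vanish identically --- rather, one must \emph{prove} the algebraic identity $a_1 a_2 a_3 = -1$ (or $=1$, depending on normalisation conventions), after which the concurrency is immediate.

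Hence the main obstacle, and the actual content of the lemma, is establishing the relation among $a_1, a_2, a_3$. This is where the hypothesis that $q$ is \emph{odd} and that the oval has the \emph{maximal} number of points must enter. The standard argument runs as follows: fix $p_1$ and consider the $q-1$ points of the oval other than $p_1$ and one other chosen point; each secant line through $p_1$ and a third point of the oval, together with the remaining structure, lets one count or multiply a collection of field elements. More precisely, for each of the $q+1$ points $r$ on the oval distinct from $p_1$, the line $|p_1 r|$ is a secant, and these lines together with the tangent at $p_1$ exhaust the pencil at $p_1$; writing each secant in the coordinates above and tracking the ``slope'' parameters, one forms the product of all nonzero elements of $\mathbb{F}_q$, which equals $-1$ by Wilson's theorem for finite fields. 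Doing this at each of $p_1, p_2, p_3$ and combining the three resulting equations yields the desired relation on $a_1 a_2 a_3$. I would expect the bookkeeping of \emph{which} field elements appear --- ensuring each nonzero element is counted exactly once and that the contributions of the secants $|p_1 p_2|$ and $|p_1 p_3|$ are handled correctly --- to be the delicate part, and the promised simplification over Segre's original should come precisely from the clean choice of tangent parameterisations $x_2 = a_1 x_3$ etc.\ made above, which makes the slope of the secant from $p_i$ through another oval point transparent.
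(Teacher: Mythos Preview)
Your outline is essentially the paper's own proof: the same coordinate frame $p_i = e_i$, the same cyclic parameterisation of the tangents as $x_2=a_1x_3$, $x_3=a_2x_1$, $x_1=a_3x_2$, the same computation of the $s_i$ and of the lines $|p_is_i|$, and the same reduction of concurrency to the identity $a_1a_2a_3=-1$ via the product of the nonzero field elements.

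There is, however, one genuine ingredient you have not written down, and without it the ``combining the three resulting equations'' step does not close. At each vertex $p_i$ your Wilson-type count gives
\[
a_i \cdot \prod_{c} \lambda_{i,c} \;=\; -1,
\]
the product running over the $q-2$ oval points $c$ distinct from $p_1,p_2,p_3$, where $\lambda_{i,c}$ is the slope of the secant $|p_ic|$ in your chosen parameterisation. Multiplying the three equations yields $a_1a_2a_3\cdot\prod_{c}\bigl(\lambda_{1,c}\lambda_{2,c}\lambda_{3,c}\bigr)=-1$, and you still need $\prod_c(\lambda_{1,c}\lambda_{2,c}\lambda_{3,c})=1$. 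The point is that this holds \emph{termwise}: if $c=[c_1:c_2:c_3]$ then with your normalisations $\lambda_{1,c}=c_2c_3^{-1}$, $\lambda_{2,c}=c_3c_1^{-1}$, $\lambda_{3,c}=c_1c_2^{-1}$, so $\lambda_{1,c}\lambda_{2,c}\lambda_{3,c}=1$. This telescoping identity is the actual crux of Segre's Lemma of the Tangents; everything else is bookkeeping. It is also where one uses that each $c_i\neq 0$ (else some secant through $c$ would contain two of the $p_j$, putting three oval points on a line).

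Two minor slips: there are $q$, not $q+1$, oval points distinct from $p_1$ (the oval has $q+1$ points in total, and the argument really uses this maximality); and your $s_2$, $s_3$ have the wrong index pattern --- with your conventions one gets $s_2=[a_1a_3:a_1:1]$ and $s_3=[1:a_1a_2:a_2]$, as you suspected.
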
 

\begin{proof} 
Without loss of generality, we may choose a co-ordinate system for the projective plane so that 
\[ p_{1} = [1:0:0] , \,\,\, p_{2} = [ 0:1:0] , \,\,\, p_{3} = [0:0:1] \,.\]
Observe that the $q+1$ lines through $p_{1}$ consist of the $q$ lines 
$L_{1}(\alpha)$ described by equations of the form $x_{2} = \alpha x_{3}$
where $\alpha \in \mathbb{F}_{q}$, and the line $L_{1}(\infty)$ with equation 
$x_{3} = 0$. The line $L_{1}(\infty)$ passes through $p_{2}$ and the line 
$L_{1}(0)$ passes through $p_{3}$. Of the remaining $q-1$, 
precisely one is tangent to the oval, which we denote $L_{1}(k_{1})$. 

Define the lines $L_{2}( \alpha)$ and $L_{3}(\alpha)$ analogously 
as the lines passing through the points $p_{2}$ and 
$p_{3}$ satisfying equations of the form $x_{3} = 
\alpha x_{1}$ and $x_{1} = \alpha x_{2}$ respectively.
Similarly, write $L_{2}(k_{2})$ and $L_{3}(k_{3})$ 
for the tangents at $p_{2}$ and $p_{3}$. 
The tangents $L_{1}(k_{1})$ and $L_{2}(k_{2})$ are given by the equations $x_{2} = k_{1}x_{3}$ 
and $x_{3} = k_{2}x_{1}$, and so intersect at the point $s_{3} = [ 1: k_{1}k_{2} : k_{2}]$. Similarly, 
$s_{1} = [ k_{3} : 1 : k_{3}k_{2}]$ 
and $s_{2} = [ k_{1}k_{3} : k_{1} : 1]$. 

To show the triangles $P = \{p_{1}, p_{2}, p_{3}\}$ and $S = \{s_{1}, s_{2}, s_{3}\}$ are in perspective, 
we must show that the three lines $|p_{1}s_{1}| = L_{1}(k_{2}k_{3})$, and $|p_{2}s_{2}| = L_{2}(k_{1}k_{3})$ and $|p_{3}s_{3}|= L_{3}(k_{1}k_{2})$ meet in a single point. The equations of these lines are respectively 
\begin{equation}\label{tangentpoint}
 x_{2} = k_{2}k_{3}x_{3}, \,\,\, x_{3} = k_{1}k_{3}x_{1}, \,\,\, x_{1} = k_{1}k_{2}x_{2} \,.
 \end{equation} 

We require a relation between the $k_{i}$ to show that these equations have a common solution. Let $c = [c_{1} : c_{2} : c_{3}]$ 
be a point on the oval distinct from $p_{1}, p_{2}, p_{3}$. The entry $c_{i}$ must be non-zero, otherwise a line $L_{j}(0)$ with $j \neq i$ would intersect the oval in three points. Denote by $L_{i}(\lambda_i)$ the line passing through $c$ for $i = 1,2,3$. 
Since $c_{2} = \lambda_{1}c_{3}$, it follows that $\lambda_{1} = c_{2}c_{3}^{-1}$. Similarly, $\lambda_{2} = c_{3}c_{1}^{-1}$ and $\lambda_{3} = c_{1}c_{2}^{-1}$. We conclude that 
\[ \lambda_{1}\lambda_{2}\lambda_{3} = c_{2}c_{3}^{-1}c_{3}c_{1}^{-1}c_{1}c_{2}^{-1} = 1 \,.\] 

Denote the remaining $q-2$ points on the oval distinct from $p_{1}, p_{2}, p_{3}$ by $c_{1}, \ldots, c_{q-2}$. 
Each line meets the oval in at most two points, so the line through $p_{i}$ and $c_{j}$ is distinct from the line through $p_{i}$ and $c_{k}$ for $j \neq k$. Denote by $\lambda_{i, k}$ the unique $\alpha \in \mathbb{F}_{q}$ such that $L_{i}(\alpha)$ meets $q_{k}$. By the above argument, the identity $\lambda_{1,i}\lambda_{2,i}\lambda_{3,i} = 1$ holds for each $i \in \{1, \ldots, q-2\}$. 

The product of the non-zero elements in the field is $-1$ because the multiplicative group is cyclic and so contains a unique element of order $2$ which does not annihilate its inverse. Combining these observations, and using commutativity of multiplication,
\[ \prod_{i=1}^{q-2} \lambda_{1,i}\lambda_{2,i}\lambda_{3,i} = \left( \prod_{x \neq k_{1}} x \right) \left( \prod_{x \neq k_{2}} x\right) \left( \prod_{x \neq k_{3}} x\right) = \left( \prod_{x \in \mathbb{F}^{\ast}_{q}} x\right)^{3} (k_{1}k_{2}k_{3})^{-1} = 1\,. \] 
Since $\left( \prod_{x \in \mathbb{F}^{\ast}_{q}} x\right)^{3} = -1$
we conclude that a non-trivial relationship holds between the three tangents: 
\begin{equation} \label{SegreRelation} 
k_{1}k_{2}k_{3} = -1 \,. 
\end{equation}
Returning at last to the claim: the point $[1 : -k_{3} : k_{1}k_{3}]$ satisfies the conditions of Equation~\eqref{tangentpoint} 
due to Segre's identity, Equation~\eqref{SegreRelation}. 
\end{proof} 

Finally, we show the result which is the aim of this paper, namely Segre's theorem which states that a maximal oval in a projective plane of odd order is in fact a conic. 

\begin{theorem} \label{SegreThm}
The points of a maximal oval in a finite projective plane of odd characteristic satisfy a 
polynomial equation of degree $2$.
\end{theorem}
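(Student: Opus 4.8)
The plan is to use the Lemma of the Tangents (Lemma~\ref{lemTangents}) to pin down a conic through three chosen points of the oval, and then to show that \emph{every} other point of the oval lies on this conic. First I would fix four points of the oval in general position. Using the projective freedom, coordinatise so that three of them are the triangle vertices $p_1 = [1:0:0]$, $p_2 = [0:1:0]$, $p_3 = [0:0:1]$, and the fourth is the unit point $c = [1:1:1]$; this normalisation is available precisely because an oval, having at least four points no three collinear, admits such a frame. The general homogeneous degree-$2$ polynomial vanishing at $p_1, p_2, p_3$ has no square terms, so it is of the form $F(x,y,z) = a\,yz + b\,xz + d\,xy$; requiring $F(c) = 0$ forces $a + b + d = 0$, leaving a one-parameter family. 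I would next use the tangent lines at $p_1, p_2, p_3$ to remove this last degree of freedom: in the coordinates of Lemma~\ref{lemTangents} the tangent at $p_1$ is $x_2 = k_1 x_3$, and imposing that the line $x_2 = k_1 x_3$ meets $\mathcal{V}(F)$ only at $p_1$ (equivalently that $\nabla F$ at $p_1$ is proportional to the normal of that tangent) gives a linear condition on $(a,b,d)$; doing this at one vertex already determines the conic up to scalar, and I would record the resulting $F$ explicitly in terms of $k_1, k_2, k_3$.

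With $F$ fixed, the core of the argument is to show an arbitrary further oval point $r = [r_1 : r_2 : r_3]$ satisfies $F(r) = 0$. Here is where Lemma~\ref{lemTangents} does the real work: apply the Lemma to the triple $\{p_1, p_2, r\}$. This produces a new perspective pair of triangles, and more importantly the Segre relation $k_1' k_2' k_3' = -1$ for the tangents associated to that triple, where $k_1', k_2'$ are tangent parameters at $p_1, p_2$ \emph{measured in a frame adapted to $\{p_1,p_2,r\}$} and $k_3'$ is the tangent parameter at $r$. The plan is to translate this relation back into the original frame: since the frame for $\{p_1,p_2,r\}$ differs from the original frame only by the choice of third vertex and unit point, the change of coordinates is an explicit diagonal-times-permutation adjustment, and under it the tangent parameters at $p_1$ and $p_2$ transform by factors depending only on $r_1, r_2, r_3$. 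Substituting, the Segre relation for $\{p_1, p_2, r\}$ becomes an equation involving $k_1, k_2$, the coordinates of $r$, and the (as yet unknown) slope of the tangent at $r$. Doing the same with the triple $\{p_1, p_3, r\}$ (or $\{p_2,p_3,r\}$) gives a second such equation; eliminating the unknown tangent direction at $r$ between the two leaves a single polynomial relation purely among $k_1, k_2, k_3, r_1, r_2, r_3$, which I claim is exactly $F(r) = 0$.

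Once every oval point is shown to lie on $\mathcal{V}(F)$, it remains only to note that $F$ is non-degenerate: it cannot be identically zero (the tangent condition fixed it up to a nonzero scalar), and it cannot contain a line, since a line in the variety would meet the oval — already $q+1$ points of $\mathcal{V}(F)$ by Proposition~\ref{conicpoints} once we know $F$ is a genuine non-degenerate conic — in too many points; more directly, the oval has $q+1$ points all lying on $\mathcal{V}(F)$, and a degenerate conic is either empty, a single point, a line, or two lines, none of which is an oval of size $q+1$ for $q \geq 3$. Hence $\mathcal{V}(F)$ is a non-degenerate conic containing the oval, and by the size count (Propositions~\ref{conicpoints} and~\ref{ovalbound}) it equals the oval.

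The main obstacle I anticipate is the bookkeeping in the middle step: carrying the tangent-slope parameters between the different coordinate frames adapted to $\{p_1,p_2,p_3\}$, $\{p_1,p_2,r\}$, $\{p_1,p_3,r\}$ without sign or indexing errors, and then performing the elimination cleanly enough that the answer is manifestly the quadratic $F$ rather than some unrecognisable multiple of it. The authors' remark that "careful parameterisation of certain tangent lines results in shorter and simpler computations" suggests the intended route chooses the auxiliary frames so that these transition factors are as simple as possible — likely parameterising each tangent by where it meets a fixed opposite side of the triangle — and I would follow that hint to keep the elimination tractable.
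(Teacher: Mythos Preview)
Your strategy --- extract the Segre identity $k_1'k_2'k_3'=-1$ for each auxiliary triple $\{p_i,p_j,r\}$, transport the tangent parameters back to the fixed frame, and eliminate the unknown tangent direction at $r$ --- is essentially Segre's original argument and will go through. The paper, however, takes a genuinely different route. Rather than normalising a fourth oval point to $[1:1:1]$, it uses the remaining diagonal freedom to set $k_1=k_2=k_3=-1$. Then, instead of re-deriving the numeric Segre relation in auxiliary frames, it invokes only the \emph{conclusion} of Lemma~\ref{lemTangents} (that the point-triangle $\{c,p_i,p_j\}$ and its tangent-triangle are in perspective from a point) and applies Desargues' theorem to deduce perspectivity from a line. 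Computing the three points on that axis and setting their $3\times 3$ determinant to zero produces a linear relation between the tangent coefficients $b_i$ at $c$ and the coordinates $c_i$, entirely in the original frame. Three such relations together with $b_1c_1+b_2c_2+b_3c_3=0$ collapse to $c_1c_2+c_2c_3+c_3c_1=0$. The Desargues manoeuvre thus replaces your frame-translation bookkeeping by a single determinant in fixed coordinates, which is precisely the ``shorter and simpler computation'' the introduction advertises.

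One correction to your sketch: the coordinate change taking the reference triangle to $\{p_1,p_2,r\}$ is \emph{not} diagonal-times-permutation --- its matrix has $r$ as third column and is only triangular. This does not actually hurt you: the product $k_1'k_2'k_3'$ is invariant under independent rescaling of the three basis vectors, so the undetermined column scalars cancel and you get, for instance, $k_2\,b_2(k_1r_3-r_2)=b_1(r_3-k_2r_1)$ with no frame-dependent residue. Your elimination and non-degeneracy arguments are then sound.
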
 

\begin{proof} 
As in Lemma \ref{lemTangents}, we choose a triangle $P = \{p_{1}, p_{2}, p_{3}\}$ 
on the oval, and up to projective equivalence we may choose $k_{1} = k_{2} = k_{3} = -1$. 
With reference to the notation in Lemma \ref{lemTangents}, we have have the points, $p_i$ and the tangent lines $L_{i}(k_i)$:
 \begin{align*}
p_{1} &= [1:0:0], & p_{2} &= [0:1:0] ,   & p_{3} &= [0:0:1] \\
x_{2} &= -x_{3}, & x_{3} &= -x_{1}, & x_{1} &= -x_{2} ,
\end{align*}
Let $c = [c_{1}:c_{2}:c_{3}]$ be a point on the oval distinct from the $p_{i}$. 
Let $b_{1}x_{1} + b_{2}x_{2} + b_{3}x_{3}=0$ be the unique tangent to the oval at $c$. 
As in Lemma \ref{lemTangents} the co-ordinates $c_{i}$ are all non-zero, and if $b_{i}$ 
were zero then $p_{i}$ would satisfy the equation of the tangent, a contradiction. 

Now, consider the triangle $\{c, p_{2}, p_{3}\}$, which by Lemma \ref{lemTangents} 
is in perspective to the triangle given by the three tangents 
\[ b_{1}x_{1} + b_{2}x_{2} + b_{3}x_{3} = 0 , \,\,\,  x_{3} = -x_{1},  \,\,\, x_{1} = -x_{2} \,.\] 
By Desargues' Theorem, these triangles are in perspective from a line:  we will compute the 
edges of $\{c, p_{2}, p_{3}\}$ and intersect them with the appropriate tangents to derive a 
relation between the $b_{i}$ and $c_{i}$. First, we intersect the line $|cp_{2}|$ with the tangent 
to the oval at $p_{3}$. The points of the line $|cp_{2}|$ are of the form $c + tp_{2} = [c_{1} : c_{2} + t : c_{3} ]$, 
while the equation of the tangent at $p_{3}$ is given by $x_{1} = -x_{2}$.
Thus the unique solution is $[c_{1}: -c_{1}: c_{3}]$. Similarly, $|cp_{3}|$ intersects the tangent at $p_{2}$ in the point $[c_{1}: c_{2}: -c_{1}]$ and the tangent through $c$ intersects $|p_{2}p_{3}|$ at the point $[0: b_{3}: -b_{2}]$. 

These three points are collinear, so the determinant of the matrix
\[ \begin{pmatrix} 0 & b_{3} & -b_{2} \\ c_{1} & -c_{1} & c_{3} \\ c_{1} & c_{2} & -c_{1} \end{pmatrix} \] 
must vanish. Using that $c_{1}$ is non-zero, this is equivalent to the identity 
\[ b_{3}( c_{1} + c_{3}) = b_{2}( c_{1} + c_{2})\,. \] 
An analogous computation with the triangles $\{c, p_{1}, p_{2}\}$ and $\{c, p_{1}, p_{3}\}$ and the triangles formed from their tangents gives two further identities: 
\[ b_{3}(c_{2} + c_{3}) = b_{1}(c_{1} + c_{2}), \,\,\,  b_{1}(c_{1} + c_{3}) = b_{2}(c_{2} + c_{3})\,. \] 

Since $[c_{1}:c_{2}:c_{3}]$ lies on the line $b_{1}x_{1} + b_{2}x_{2} + b_{3}x_{3}$, the following identity holds:
\[ \left(c_{1} + c_{2}\right)\left( b_{1}c_{1} + b_{2}c_{2} + b_{3}c_{3} \right) = 0\,. \] 
Multiplying out and substituting the identities obtained from Desargues: 
\begin{eqnarray*} 
b_{1}(c_{1} + c_{2}) c_{1} + b_{2}(c_{1} + c_{2})c_{2} + b_{3}(c_{1} + c_{2})c_{3} & = & 
 b_{3}(c_{2} + c_{3})c_{1} + b_{3}(c_{1} + c_{3})c_{2} + b_{3}(c_{1} + c_{2})c_{3} \\
 & = &b_{3}\left( (c_{2} + c_{3})c_{1} + (c_{1} + c_{3})c_{2} + (c_{1} + c_{2})c_{3}\right)  \\
 & = & 2b_{3} \left( c_{1}c_{2} + c_{2}c_{3} + c_{3}c_{1} \right)\,.
 \end{eqnarray*} 
Since $b_{3} \neq 0$ and the characteristic is odd, $c_{1}c_{2} + c_{2}c_{3} + c_{3}c_{1} = 0$ 
holds for the point $[c_{1}: c_{2} : c_{3}]$ of the oval. But the point $c$ was an arbitrary point of the oval distinct from the $p_{i}$ 
 and the equation holds for the points $p_{i}$. We have shown that the $q+1$ points of the oval lie on the 
conic $x_{1}x_{2} + x_{2}x_{3} + x_{3}x_{1}$. This completes the proof. 
\end{proof} 

\begin{remark} 
The reader may be perturbed by the explicit conic constructed in Theorem \ref{SegreThm}. 
In fact, all conics in $PG_{2}(\mathbb{F}_{q})$ are projectively equivalent (in essentially the same way 
that all bases of a vector space are equivalent up to choice of basis), and this conic was forced by 
our choice of basis at the start of the proof. 
\end{remark} 

\bigskip
\noindent
\textsc{Patrick J. Browne}\\
Technological University of the Shannon: Midlands Midwest, Ireland\\
\href{patrick.browne@tus.ie}{patrick.browne@tus.ie}
\medskip

\noindent
\textsc{Steven T. Dougherty}\\
University of Scranton, Scranton, PA 18510, USA\\
\href{Prof.Steven.Dougherty@gmail.com}{Prof.Steven.Dougherty@gmail.com}
\medskip

\noindent
\textsc{Padraig \'O Cath\'ain}\\
Dublin City University\\
\href{padraig.ocathain@dcu.ie}{padraig.ocathain@dcu.ie}

\bibliographystyle{abbrv}
\flushleft{
\bibliography{Biblio2020}
}

\end{document}